\theoremstyle{plain}
\newtheorem{theorem}{Theorem}
\newtheorem{corollary}[theorem]{Corollary}
\newtheorem{lemma}[theorem]{Lemma}
\theoremstyle{definition}
\newtheorem{example}[theorem]{Example}
\newtheorem{definition}[theorem]{Definition}
\newtheorem{remark}[theorem]{Remark}
\theoremstyle{remark}
\begin{document}
\title{Comaximal factorization lattices}

\author{Tiberiu Dumitrescu  and Mihai Epure}

\address{Facultatea de Matematica si Informatica,University of Bucharest,14 A\-ca\-de\-mi\-ei Str., Bucharest, RO 010014,Romania}
\email{tiberiu@fmi.unibuc.ro, tiberiu\_dumitrescu2003@yahoo.com }

\address{Simion Stoilow Institute of Mathematics of the Romanian AcademyResearch unit 5, P. O. Box 1-764, RO-014700 Bucharest, Romania}\email{mihai.epure@imar.ro, epuremihai@yahoo.com }

\maketitle
%
%
%

\section{Introduction}

In \cite{BH}, Brewer and Heinzer studied 
the (integral) domains $D$ having the property that each
proper ideal $A$ of $D$ has a comaximal ideal factorization $A = Q_1Q_2\cdots  Q_n$ (that is, $Q_1,...,Q_n$ are pairwise comaximal ideals)
and  the $Q_i$'s  have some additional property.  
They proved in \cite[Theorems 2, 8 and 9]{BH}  that 
for a   domain $D$,  the following are equivalent:

$(1)$ Each  proper ideal $A$ of $D$ has a comaximal factorization
$A = Q_1Q_2\cdots  Q_n$ where the $Q_i$'s have prime radical (resp. are primary, resp. are prime powers).

$(2)$ The prime spectrum of $D$ is a tree under inclusion and each  ideal of $D$ has only finitely many minimal primes (resp. $D$ is one dimensional and each  ideal of $D$ has only finitely many minimal primes, resp. $D$ is a Dedekind domain). Some related work was done in \cite{EGZ}.

The aim of this paper is to show that most of the results in \cite{BH} can be obtained in the setup of multiplicative lattices (also called abstract ideal theory). Our main results are Theorems \ref{5}, \ref{4},  \ref{11}, \ref{12}, \ref{15} and \ref{22}.

\section{Basic facts}

We recall some basic definitions and facts. For details, the reader may consult \cite{A} and \cite{D}.
The basic concept we use is:

\begin{definition} A {\em multiplicative lattice} is a  ordered multiplicative
commutative monoid $(L, \leq, \cdot)$ with the following properties:

$(1)$ $(L, \leq)$ is a complete lattice with   top element $1$ and  bottom
element $0.$ 

$(2)$ $x( \bigvee Y) = \bigvee_{y\in Y} xy$  for each $x\in L$ and $Y\subseteq L$ (here $\bigvee Y$ means $\bigvee_{y\in Y} y$).

$(3)$ $x1=x$ for all  $x\in L$.
\end{definition}
We always assume that  $0\neq 1$.
As usual,  the lattice operations $\vee$ and $\wedge$ are called {\em join } and {\em meet} respectively.
A subset $G$ of $L$ {\em generates} $L$ if every $x\in L$ is the join of some subset of $G$. An element $c\in L$ is {\em compact} if  
$c\leq \bigvee H$ with $H\subseteq L$ implies $c\leq \bigvee F$ for some finite subset $F$ of $H$. 
\\[1mm]
{\em In this paper, by {\em \underline{lattice}} we mean a multiplicative lattice $L$ such that}

$(i)$ $1$ is compact, 

$(ii)$ $L$ is generated by the set of compact elements,

$(iii)$ every product of two compact elements is compact.
\\

Let $L$ be a lattice.
An element    $x\in L$ is {\em proper} if $x\neq 1$.
Let $Max(L)$ denote the set of (proper) {\em maximal elements} of $L$. Every proper element is smaller than some maximal element.  A proper element $p$ is {\em prime} if  $xy \leq p$ with $x,y \in L$ implies $x \leq p$ or $y\leq p$.   Every maximal element is prime. $L$ is a {\em lattice domain} if $0$ is a prime element.
The {\em spectrum} $Spec(L)$ of $L$ is the set of all prime elements of $L$. The {\em dimension} of $L$ is the supremum of all integers $n\geq 0$ such that there exists a chain of prime elements $p_0<p_1<...<p_n$.
The {\em radical} of an element  $a\in L$ is
$$ \sqrt{a}=\bigvee \{ x\in L\ | \ x^n\leq a \mbox{ for some } n\geq 1   \}.$$ Then $\sqrt{a}$ is the meet of all primes $p\geq a$. An element $q\in L-\{1\}$ is {\em primary} if $xy\leq q$ with $x,y\in L$ implies $x\leq q$ or $y\leq \sqrt{q}$.
 For  $x,y\in L$, their quotient  is  $$(y : x) =\bigvee \{a \in L ;\ ax \leq y\}.$$
 An element $m$ is {\em meet-principal} (resp. {\em weak meet-principal})  
 if  $$a \wedge bm = ((a : m) \wedge b)m  \ \ \ \forall  a, b \in L \mbox{ (resp. } m\wedge a=(a:m)m\ \  \forall a\in L).$$
An element  $j$ is {\em join-principal} (resp. {\em weak join-principal})  if $$((aj \vee b) : j) = a \vee (b : j)  \ \ \forall  a, b \in L \mbox{ (resp. } (aj:j)=a\vee (0:j) \ \ \forall  a\in L).$$
 And $p\in L$ is {\em principal} if $p$ is both meet-principal and join-principal. The principal elements are compact. 
 If $x$ and $y$ are principal elements, then so is $xy$. The converse is also true if $L$ is a lattice domain and $xy\neq 0$. 
 In a lattice domain, every nonzero principal element is cancellative.

Say that two elements $a,b\in L$   are {\em comaximal} if $a\vee b=1$.
The following lemma (used hereafter without notice) collects some simple comaximality facts.

\begin{lemma} \label{1} 
Let $L$ be a lattice and $a,b,c_1,...,c_n\in L$.

$(i)$ If $a\vee b=1$, then $a\wedge b=ab$ and $(a:b)=a.$

$(ii)$ $a\vee b=1$ iff $\sqrt{a}\vee \sqrt{b}=1$ iff $a^i\vee b^j=1$ for some (all) $i,j\geq 1$.

$(iii)$ If $a\vee c_i=1$ for $i=1,...,n$, then $a\vee (c_1\cdots c_n)=1$
\end{lemma}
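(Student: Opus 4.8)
The plan is to prove each of the three parts by exploiting the distributive law $x(\bigvee Y)=\bigvee_{y\in Y}xy$ (equivalently, the monotonicity of multiplication it entails) together with two standing facts from the excerpt: every proper element lies below a maximal, hence prime, element, and $\sqrt{a}$ is the meet of all primes $p\geq a$.

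For $(i)$, I would first record the inequality $ab\leq a\wedge b$, which is free since $ab\leq a1=a$ and $ab\leq 1b=b$. For the reverse inclusion I would multiply $a\wedge b$ by $1=a\vee b$ and distribute: $a\wedge b=(a\wedge b)(a\vee b)=(a\wedge b)a\vee (a\wedge b)b\leq ba\vee ab=ab$, the last bound using $a\wedge b\leq b$ and $a\wedge b\leq a$. This yields $a\wedge b=ab$. For $(a:b)=a$, the inclusion $a\leq(a:b)$ is clear because $ab\leq a$; conversely, if $xb\leq a$ then $x=x(a\vee b)=xa\vee xb\leq a$, so $(a:b)\leq a$.

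For $(ii)$ the only nontrivial implication is $\sqrt{a}\vee\sqrt{b}=1\Rightarrow a\vee b=1$, since the converse is immediate from $a\leq\sqrt{a}$ and $b\leq\sqrt{b}$. I would argue contrapositively: if $a\vee b\neq 1$, choose a maximal element $m\geq a\vee b$; then $m$ is prime with $m\geq a$ and $m\geq b$, so the meet-of-primes description of the radical forces $m\geq\sqrt{a}$ and $m\geq\sqrt{b}$, whence $\sqrt{a}\vee\sqrt{b}\leq m\neq 1$. To reach the statement about powers I would observe $\sqrt{a^i}=\sqrt{a}$ (one inclusion from $a^i\leq a$, the other from $x^n\leq a\Rightarrow x^{ni}\leq a^i$ by monotonicity), and likewise $\sqrt{b^j}=\sqrt{b}$; applying the first equivalence to the pair $a^i,b^j$ then gives $a^i\vee b^j=1\iff\sqrt{a}\vee\sqrt{b}=1\iff a\vee b=1$ for all $i,j\geq 1$, which settles the ``some'' and ``all'' versions simultaneously.

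For $(iii)$, induction reduces the claim to the two-factor case, where $1=(a\vee c)(a\vee d)=a^2\vee ca\vee ad\vee cd\leq a\vee cd$ gives $a\vee cd=1$. These computations are short; the one step that genuinely uses the lattice structure rather than formal monoid manipulation is the descent from radicals to elements in $(ii)$, and that is where the existence of a maximal element above each proper element does the real work.
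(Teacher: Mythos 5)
Your proposal is correct and follows essentially the same route as the paper's proof: the multiplication-by-$1=a\vee b$ trick with distributivity for $(i)$ and $(iii)$, and the observation that maximal elements are prime together with the radical-as-meet-of-primes description for $(ii)$. The only cosmetic differences are that you verify $(a:b)=a$ by direct double inclusion where the paper computes $(a:b)=(a:(a\vee b))=(a:1)=a$, and you route the powers statement through $\sqrt{a^i}=\sqrt{a}$ where the paper states the equivalences $a\vee b\leq m \iff \sqrt{a}\vee\sqrt{b}\leq m \iff a^i\vee b^j\leq m$ uniformly for a maximal $m$; both variants carry the same content.
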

\begin{proof}
$(i)$ We have 
$a\wedge b = (a\wedge b)(a\vee b) \leq (a\wedge b)a\vee (a\wedge b)b \leq ab\leq a\wedge b
$ and $(a:b)=(a:(a\vee b))=(a:1)=a.$

$(ii)$ If $m$ is maximal element, then $a\vee b\leq m$ iff $\sqrt{a}\vee \sqrt{b}\leq m$ iff $a^i\vee b^j\leq m$.

$(iii)$ We have
$ 1  = (a\vee c_1)\cdots (a\vee c_n)\leq a\vee (c_1\cdots c_n).$
\end{proof}

For further use, we prove two useful formulas.
\begin{lemma}\label{9} 
Let $L$ be a lattice.

$(i)$ If $(a_{1k})_{k\geq 1}$,...,$(a_{nk})_{k\geq 1}$ are increasing sequences of elements of $L$, then
$$(\bigvee_{k\geq 1} a_{1k})\wedge \cdots \wedge (\bigvee_{k\geq 1} a_{nk})=
\bigvee_{k\geq 1} (a_{1k}\wedge \cdots \wedge  a_{nk}).
$$

$(ii)$ If $b,c\in L$ with $c$   compact, then 
$(\sqrt{b}:c) = \sqrt{\bigvee_{k\geq 1} (b:c^k)}$.
\end{lemma}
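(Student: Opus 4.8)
The plan is to prove the two formulas separately, in each case reducing the problem to compact elements and exploiting that the joins involved are \emph{directed} (the given sequences are increasing). Throughout I would use freely that multiplication is order-preserving and that $x\le 1$ forces $xy\le y$ (both immediate from axiom $(2)$ and $x1=x$), together with the fact that $x=\bigvee\{c: c \text{ compact},\ c\le x\}$ for every $x\in L$, so that two elements coincide as soon as they dominate the same compact elements.

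For $(i)$, write $A_i=\bigvee_k a_{ik}$. The inequality $\bigvee_k(a_{1k}\wedge\cdots\wedge a_{nk})\le A_1\wedge\cdots\wedge A_n$ is clear, since for each fixed $k$ one has $a_{1k}\wedge\cdots\wedge a_{nk}\le A_i$ for all $i$. For the reverse inequality I would take an arbitrary compact $c\le A_1\wedge\cdots\wedge A_n$ and show it lies below the right-hand side; since $L$ is generated by its compact elements this suffices. For each $i$, compactness of $c$ together with $c\le A_i=\bigvee_k a_{ik}$ yields a finite subfamily whose join dominates $c$, and because the sequence $(a_{ik})_k$ is increasing this join is a single term $a_{i,k_i}$, so $c\le a_{i,k_i}$. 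Setting $K=\max_i k_i$ and using monotonicity gives $c\le a_{iK}$ for every $i$, whence $c\le a_{1K}\wedge\cdots\wedge a_{nK}\le\bigvee_k(a_{1k}\wedge\cdots\wedge a_{nk})$. This is the whole content of $(i)$: finite meets do not distribute over arbitrary joins, and it is precisely compactness plus directedness of the families that rescues the identity.

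For $(ii)$ the key preliminary is the compact description of the radical: for a compact element $e$ one has $e\le\sqrt b$ if and only if $e^m\le b$ for some $m\ge 1$. The nontrivial direction writes $e\le x_1\vee\cdots\vee x_r$ with $x_i^{m_i}\le b$ (possible by compactness and the definition of $\sqrt b$), expands a high power $e^M\le(x_1\vee\cdots\vee x_r)^M$ into a join of monomials via axiom $(2)$, and observes that once $M>\sum_i(m_i-1)$ every monomial must, by the pigeonhole principle, contain a factor $x_i^{m_i}$; since the remaining factors are $\le 1$, each monomial is $\le x_i^{m_i}\le b$, hence $e^M\le b$. I would also record that $(b:c^k)$ increases with $k$, because $bc\le b$ gives $(b:c^k)\le(b:c^{k+1})$, so that $J:=\bigvee_k(b:c^k)$ is a directed join.

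With these in hand I would prove $(\sqrt b:c)=\sqrt J$ by comparing the compact elements below each side. For compact $e$, the relation $e\le(\sqrt b:c)$ is equivalent to $ec\le\sqrt b$, and since $ec$ is compact by axiom $(iii)$, the preliminary turns this into $e^nc^n\le b$ for some $n$. On the other side, $e\le\sqrt J$ is equivalent to $e^m\le J$ for some $m$; as $e^m$ is compact and $J$ is directed, this means $e^m\le(b:c^k)$, i.e. $e^mc^k\le b$, for some $m,k$. Now $e^nc^n\le b$ trivially implies the latter, while conversely, given $e^mc^k\le b$, putting $N=\max(m,k)$ and using $e^{N-m}c^{N-k}\le 1$ gives $e^Nc^N=(e^{N-m}c^{N-k})(e^mc^k)\le e^mc^k\le b$. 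Hence the two sides dominate exactly the same compact elements and are equal. The main obstacle is the compact radical lemma together with the index bookkeeping; once the joins are recognized as directed and everything is pushed down to compact elements, the non-distributivity of the lattice no longer interferes.
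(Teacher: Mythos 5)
Your proof is correct and takes essentially the same approach as the paper's: both parts reduce to testing compact elements against each side, exploiting compactness together with the directedness of the increasing joins, and in $(ii)$ the compact characterization of the radical (which the paper uses implicitly for the compact element $fc$, and which you prove explicitly via the pigeonhole expansion). The only difference is that you spell out this standard auxiliary fact and the residuation property $x\le (y:z)\iff xz\le y$, which the paper takes for granted.
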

\begin{proof}
$(i)$ It suffices to prove that $"\leq"$ holds. Let $f$ be a compact $\leq $ LHS. Then $f\leq a_{1k}\wedge \cdots \wedge  a_{nk}$ for some $k\geq 1$, because $(a_{1k})_{k\geq 1}$,...,$(a_{nk})_{k\geq 1}$ are increasing sequences, so $f\leq $ RHS.

$(ii)$ Let $f$ be a compact $\leq $ LHS. We get succesively: $(fc)^k\leq b$ for some $k\geq 1$,  $f^k\leq (b:c^k)$,  $f\leq \sqrt{(b:c^k)}$,  $f\leq $ RHS. Conversely, let  $f$ be a compact $\leq $ RHS. Then $f^j\leq (b:c^k)$ for some $j,k\geq 1$ where we can arrange $j=k$. 
We get $(fc)^k\leq  b$, so  $f\leq (\sqrt{b}:c)$, thus $f\leq $ LHS.
\end{proof}

\section{Results}

The aim of this paper is to study the comaximal factorizations with additional properties in lattices, thus extending the work done in \cite{BH}.  We start by establishing a key result about the comaximal factorizations with prescribed radicals.

\begin{theorem} \label{5} 
Let  $L$ be a lattice, $a_1,...,a_n\in L-\{1\} $   pairwise comaximal elements and set $a=a_1a_2  \cdots  a_n$. Then every $b\in L$ with 
$\sqrt{b}=\sqrt{a}$ can be uniquely written as a product of some  pairwise comaximal elements $b_1,...,b_n$ such that $\sqrt{b_i}=\sqrt{a_i}$ for $i=1,...,n$. In particular, if $a$ is a radical element, then $a_1$,...,$a_n$ are radical elements.
\end{theorem}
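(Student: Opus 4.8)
The plan is to replace the given radical data by compact witnesses and then exhibit each factor as one fixed quotient of $b$, so that existence and uniqueness both fall out of the same construction. First I set $r_i=\sqrt{a_i}$ and $u_i=\bigwedge_{j\neq i}r_j=\prod_{j\neq i}r_j$. Since the $a_i$ are pairwise comaximal, Lemma \ref{1} gives that the $r_i$ are pairwise comaximal, that $\prod_i a_i=\bigwedge_i a_i$, and — using that $\sqrt{\,\cdot\,}$ is the meet of the primes above an element and that a prime lies above a product iff it lies above a factor — that $\sqrt a=\bigwedge_i r_i$; in particular $r_i\vee u_i=1$ (Lemma \ref{1}(iii)) and $\sqrt b=\bigwedge_i r_i\le r_i$. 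The key preliminary step is to choose, for each $i$, a \emph{compact} $c_i\le u_i$ with $r_i\vee c_i=1$: this is possible because $r_i\vee u_i=1$, the top $1$ is compact, and $u_i$ is a join of compact elements, so finitely many compacts below $u_i$ already join with $r_i$ to $1$, and I take $c_i$ to be their (compact) join.

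With the $c_i$ fixed I define $b_i:=\bigvee_{k\ge 1}(b:c_i^k)$ and claim these are the desired factors. For the radicals, Lemma \ref{9}(ii) gives $\sqrt{b_i}=(\sqrt b:c_i)=\bigwedge_l(r_l:c_i)$, where $(r_i:c_i)=r_i$ by comaximality (Lemma \ref{1}(i)), while for $l\neq i$ one has $c_i\le u_i\le r_l$, so $(r_l:c_i)=1$; hence $\sqrt{b_i}=r_i$, and the $b_i$ are pairwise comaximal by Lemma \ref{1}(ii). For the product, each sequence $(b:c_i^k)_k$ is increasing, so Lemma \ref{9}(i) interchanges meet and join, and the identity $\bigwedge_i(b:x_i)=(b:\bigvee_i x_i)$ gives $\bigwedge_i b_i=\bigvee_k\big(b:\bigvee_i c_i^k\big)$. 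Writing $d=\bigvee_i c_i$, the relations $r_i\vee c_i=1$ give $r_i\vee d=1$ for all $i$, whence $d\vee\bigwedge_i r_i=1$ (Lemma \ref{1}(iii)) and then $d\vee b=1$ (Lemma \ref{1}(ii)); a pigeonhole estimate $d^{nk}\le\bigvee_i c_i^k$ forces $(b:\bigvee_i c_i^k)=b$ for every $k$, so $\bigwedge_i b_i=b$, and by comaximality $\prod_i b_i=b$. This proves existence.

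For uniqueness, suppose $b=\prod_i b_i'$ with the $b_i'$ pairwise comaximal and $\sqrt{b_i'}=r_i$; I show $b_i'=b_i$ for the \emph{same} $c_i$. Put $\sigma_i=\prod_{j\neq i}b_j'$, so $\sqrt{\sigma_i}=u_i$; since $c_i$ is compact and $c_i\le u_i=\sqrt{\sigma_i}$, some power $c_i^N\le\sigma_i$, whence $b_i'c_i^N\le b_i'\sigma_i=b$ and so $b_i'\le b_i$. Conversely, a compact $f\le b_i$ satisfies $fc_i^k\le b\le b_i'$ for some $k$; as $\sqrt{b_i'}=r_i$ is comaximal with $c_i$, Lemma \ref{1} gives $b_i'\vee c_i^k=1$, hence $(b_i':c_i^k)=b_i'$ and $f\le b_i'$, and since $L$ is generated by compacts this yields $b_i\le b_i'$. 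Thus $b_i=b_i'$. The final assertion follows by applying the theorem to $b=a$: when $a$ is radical, both $\{a_i\}$ and $\{r_i\}$ are comaximal factorizations of $a$ with the prescribed radicals, the latter because $\prod_i r_i=\bigwedge_i r_i=\sqrt a=a$, so uniqueness forces $a_i=r_i$ and each $a_i$ is radical.

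I expect the main obstacle to be exactly the two points where compactness must be injected by hand, because $u_i$ and $\sigma_i$ are radical-type elements that need not be compact: producing the compact $c_i$ with $c_i\le u_i$ and $r_i\vee c_i=1$ (so that Lemma \ref{9}(ii) applies and so that $c_i^N\le\sigma_i$ in the uniqueness step), and proving $\bigwedge_i b_i=b$, where Lemma \ref{9}(i), the colon identity, and the pigeonhole bound $d^{nk}\le\bigvee_i c_i^k$ with $d\vee b=1$ must be combined to collapse the infinite quotients back to $b$.
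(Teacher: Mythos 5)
Your proposal is correct and follows essentially the same route as the paper: you construct the same compact witnesses $c_i$ and the same factors $b_i=\bigvee_{k\geq 1}(b:c_i^k)$, and you invoke Lemma \ref{9} in exactly the paper's way for both the radical computation and the product identity. The only deviations are cosmetic --- you work with $r_i=\sqrt{a_i}$ instead of reducing to the radical case, obtain $b\vee\bigvee_i c_i^k=1$ via the pigeonhole bound $d^{nk}\leq\bigvee_i c_i^k$ rather than directly, and prove uniqueness by two inequalities instead of the paper's single chain of equalities, all of which are sound.
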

\begin{proof} 
Since $a_1,...,a_n$ are  pairwise comaximal, we have 
$$\sqrt{a}=\sqrt{a_1\cdots a_n}=
\sqrt{a_1}\wedge \cdots \wedge \sqrt{a_n}  
=\sqrt{a_1}\cdots  \sqrt{a_n}$$ 
so we may assume that $a=\sqrt{a}$ and $\sqrt{a_i}=a_i$ for each $i$.
For $i$ between $1$ and $n$, produce the elements $c_i$, $b_i$ as follows. Since  $a_i$ is comaximal to 
$\prod_{j\neq i}a_j $, it follows that $a_i\vee c_i=1$ for some compact element $c_i\leq \prod_{j\neq i}a_j $. It follows that
\begin{equation}\label{10} 
(a:c_i) = ((a_1\wedge  \cdots \wedge a_n):c_i)=
(a_1:c_i)\wedge  \cdots \wedge (a_n:c_i) = (a_i:c_i) = a_i.
\end{equation}
Set $b_i=
\bigvee_{k\geq 1} (b:c_i^k)$. 
By Lemma \ref{9} and equality $(\ref{10})$, we get
\begin{equation}\label{3a} 
\sqrt{b_i}= \sqrt{\bigvee_{k\geq 1} (b:c_i^k)}=(\sqrt{b}:c_i)=(a:c_i)=a_i.
\end{equation}
Since $a_i\vee c_i=1$ for each $i$, it follows that $a=a_1 \cdots  a_n$ is comaximal to $c_1^k\vee \cdots \vee c_n^k$, so
\begin{equation}\label{3} b\vee c_1^k\vee \cdots \vee c_n^k=1 \end{equation}
because $\sqrt{b}=a$.
By equality $(\ref{3a})$, Lemma \ref{9} and equality $(\ref{3})$, we get
$$ b_1   \cdots  b_n= b_1\wedge  \cdots \wedge b_n=
\bigvee_{k\geq 1} (b:c_1^k)\wedge  \cdots \wedge 
\bigvee_{k\geq 1} (b:c_n^k)=
$$
$$ 
=\bigvee_{k\geq 1} ((b:c_1^k)\wedge  \cdots \wedge 
 (b:c_n^k))=\bigvee_{k\geq 1} (b:(c_1^k\vee \cdots \vee c_n^k))=b.
$$
We prove the uniqueness. Suppose that $b=d_1\cdots d_n$ for some pairwise comaximal elements $d_1,...,d_n$ such that $\sqrt{d_i}=a_i$ for $i=1,...,n$. Note that $(d_i:c_i^k)=d_i$ because $a_i\vee c_i=1$. Moreover, for $i\neq j$, we have that $c_i\leq a_j$, so $c_i^k\leq d_j$ for some $k\geq 1$. 
Using these remarks and  Lemma \ref{9}, we get
$$b_i=\bigvee_{k\geq 1} (b:c_i^k) =  \bigvee_{k\geq 1} ((d_1\wedge \cdots \wedge d_n):c_i^k) = \bigvee_{k\geq 1} ((d_1:c_i^k)\wedge  \cdots \wedge 
 (d_n:c_i^k))= 
$$
$$ 
=\bigvee_{k\geq 1} (d_1:c_i^k)\wedge  \cdots \wedge 
\bigvee_{k\geq 1} (d_n:c_i^k)= \bigvee_{k\geq 1} (d_i:c_i^k)=d_i.
$$
The "in particular" statement follows from this uniqueness property.
\end{proof}

We give the   key definition of this paper.

\begin{definition}\label{14}
Let $L$ be a lattice. 
For an element  $a\in L-\{1\}$, consider a factorization 
$$
(\sharp)\ \ \ \ \ \   a=a_1a_2\cdots a_n \mbox{ with } a_1,...,a_n 
 \mbox{ pairwise comaximal elements from } L-\{1\}.
$$
We say that 
$(\sharp)$ is a:

$(i)$  {\em comaximal prime radical factorization  (CPR-factorization)} of $a$ if $\sqrt{a_1}$,...,$\sqrt{a_n}$ are prime elements,

$(ii)$  {\em comaximal primary factorization  (CQ-factorization)} of $a$ if ${a_1}$,...,${a_n}$ are primary elements,

$(iii)$  {\em comaximal prime power factorization  (CPP-factorization)} of $a$ if ${a_1}$,...,${a_n}$ are prime powers.

Next say that $L$ is a {\em CPR-lattice}, {\em CQ-lattice}, {\em CPP-lattice}, if every $x\in L-\{1\}$ has a CPR-factorization, CQ-factorization, CPP-factorization respectively.
\end{definition}

We clearly have
$$ (*)\ \ \ \  \ \ \ \ \ \ \mbox{CQ-lattice} \ \ \  \Rightarrow \ \ \  \mbox{CPR-lattice}\ \ \ \Leftarrow\ \ \  \mbox{CPP-lattice}$$

 These three conditions were studied in \cite{BH} for the ideal lattice of an integral  domain. To see that  in $(*)$ no other implication holds, let us examine the following four finite lattices $L_1$, $L_2$, $L_3$, $L_4$ having underlying set $\{0,1,a,b,c,d \}$ ordered by 
$a\leq b\leq d$ and  $a\leq c\leq d$.

\begin{example}
Let   $L_1$ be the lattice  with multiplication 
$$ a^2=ab=ac=ad=bc=c^2=cd=a,\ \  b^2=d^2=bd=b.$$ Its primes are $0,c$ and $d$, hence $a=c^2$, $b=d^2$ are CPP-factorizations. So $L_1$ is a CPP-lattice but  not a CQ-lattice, because $a$ is not primary as $cd=a$, $c\not\leq a$ and $d\not\leq c=\sqrt{a}$. 
\end{example}

\begin{example}
Let  $L_2$  be the lattice  with multiplication  
$$xy=a \mbox{ for all }  x,y\in \{a,b,c,d\}.$$
Its primes are $0$ and $d$, hence $a,b,c$ are primary elements as their radical is the maximal element $d$. So $L_2$ is a CQ-lattice but is not a CPP-lattice because $b$ and $c$ are not prime powers. 
\end{example}

\begin{example}
Let  $L_3$  be the lattice  with multiplication  
$$a^2=ab=ac=ad=bc=a,\ \ b^2=bd=b,\ \ c^2=cd=c, \ \ d^2=d.  $$
Its primes are $0,b,c$ and $d$. As $a=\sqrt{a}$ has no CPR-factorization, $L_3$ is not a CPR-lattice. 
\end{example}

\begin{example}
Let  $L_4$  be the lattice  with multiplication  
$$a^2=ab=b^2=0,\ \ ac=ad=bc=bd=a,\ \ c^2=cd=d^2=c.  $$
Its primes are $b$ and $d$, so the radical elements are prime, hence $L_4$ is a CPR-lattice. Note that $a$ is neither a prime power nor primary since $bc=a$, $b\not\leq a$, $c\not\leq b=\sqrt{a}$.
Hence $L_4$ is neither a CPP-lattice nor a CQ-lattice. 
\end{example}

Say that a lattice $L$ is a {\em treed lattice} if its spectrum  is a tree as an ordered set under inclusion (equivalently, if any two incomparable prime elements of $L$ are comaximal).  
Note that in a treed lattice, the elements of $Min(a)$ are pairwise comaximal for each $a\in L-\{1\}$. Here $Min(a)$ is the set of minimal primes over $a$.
The following result extends \cite[Theorem 2]{BH}.

\begin{theorem} \label{4} 
Let $L$ be a lattice. 

$(i)$ An element   $a\in L-\{1\}$  has 
a CPR-factorization iff 
$Min(a)$ is finite and its elements are pairwise comaximal; if it exists, the CPR-factorization of $a$ is unique.

$(ii)$  $L$ is a CPR-lattice iff $L$ is   treed  and $Min(a)$ is finite for all $a\in L-\{1\}$.
\end{theorem}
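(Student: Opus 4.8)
The goal is to prove Theorem \ref{4}, which characterizes when an element has a CPR-factorization and when $L$ is a CPR-lattice.

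\bigskip

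The plan is to prove part $(i)$ first and then derive part $(ii)$ as a consequence. For part $(i)$, I would prove the two directions separately.

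For the \emph{forward direction} of $(i)$, suppose $a = a_1 \cdots a_n$ is a CPR-factorization, so each $\sqrt{a_i} = p_i$ is prime and the $a_i$ are pairwise comaximal. By Lemma \ref{1}$(ii)$, the $p_i$ are pairwise comaximal as well. I claim $Min(a) = \{p_1,\dots,p_n\}$, which immediately gives finiteness and pairwise comaximality. Since $\sqrt{a} = \sqrt{a_1 \cdots a_n} = p_1 \wedge \cdots \wedge p_n$ (using that pairwise comaximal radicals meet equals product equals radical of product, as in the opening of the proof of Theorem \ref{5}), each $p_i \geq a$ is prime, so each $p_i$ contains some minimal prime over $a$. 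Conversely any prime $q \geq a$ satisfies $q \geq \sqrt{a} = p_1 \wedge \cdots \wedge p_n = p_1 \cdots p_n$, so primeness forces $q \geq p_i$ for some $i$; thus the minimal primes over $a$ are exactly the minimal members of $\{p_1,\dots,p_n\}$, and since these are pairwise comaximal (hence pairwise incomparable), $Min(a) = \{p_1,\dots,p_n\}$.

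For the \emph{converse direction} of $(i)$, assume $Min(a) = \{p_1,\dots,p_n\}$ is finite with pairwise comaximal members. Then $\sqrt{a} = p_1 \wedge \cdots \wedge p_n = p_1 \cdots p_n$ (pairwise comaximality turns the meet of the minimal primes into their product). This exhibits $\sqrt{a}$ as a product of pairwise comaximal elements whose radicals are the primes $p_i$. Now apply Theorem \ref{5} with the role of the $a_i$ played by the $p_i$ and with $b = a$: since $\sqrt{a} = \sqrt{p_1 \cdots p_n}$, Theorem \ref{5} yields a unique factorization $a = b_1 \cdots b_n$ into pairwise comaximal elements with $\sqrt{b_i} = p_i$ prime for each $i$. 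This is precisely a CPR-factorization, and Theorem \ref{5} delivers its uniqueness at the same time, settling the last clause of $(i)$.

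\bigskip

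For part $(ii)$, the equivalence follows by quantifying $(i)$ over all proper elements. If $L$ is a CPR-lattice, then by $(i)$ every $a \in L - \{1\}$ has finite $Min(a)$; to see $L$ is treed I would take two incomparable primes $p, q$ and apply the finiteness and comaximality of $Min(p \wedge q)$: since $p$ and $q$ are primes above $p \wedge q$, each lies above some minimal prime over $p \wedge q$, and a short argument using incomparability shows $p, q$ must themselves be among the pairwise comaximal minimal primes, forcing $p \vee q = 1$. Conversely, if $L$ is treed with $Min(a)$ always finite, then for each proper $a$ the set $Min(a)$ is finite and, by the remark recorded just before the theorem (in a treed lattice the elements of $Min(a)$ are pairwise comaximal), its members are pairwise comaximal; so $(i)$ gives a CPR-factorization of $a$.

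\bigskip

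I expect the main obstacle to be the identity $\sqrt{a} = p_1 \wedge \cdots \wedge p_n = p_1 \cdots p_n$ when $Min(a)$ is finite and its members are pairwise comaximal. The equality $\sqrt{a} = \bigwedge Min(a)$ requires knowing that the radical (the meet of all primes above $a$) is already captured by the minimal ones; this is routine when there are finitely many minimal primes, but I would want to confirm it holds in the lattice setting, presumably since every prime above $a$ dominates a minimal one. The passage from the meet to the product is then exactly Lemma \ref{1} applied inductively to pairwise comaximal elements. The treed-direction argument in $(ii)$ (extracting comaximality of two incomparable primes from the structure of $Min(p \wedge q)$) is the other delicate point, where I must be careful that incomparable primes cannot both sit above a common minimal prime of $p \wedge q$.
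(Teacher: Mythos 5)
Your proof is correct and follows essentially the same route as the paper: both directions of $(i)$ rest on Theorem \ref{5} (existence and uniqueness of the factorization with prescribed prime radicals) combined with the identification of $Min(a)$ with the set of radicals of the factors of any CPR-factorization, and $(ii)$ is derived exactly as in the paper from the observation that $Min(p\wedge q)=\{p,q\}$ for incomparable primes $p,q$ together with the remark that in a treed lattice the elements of $Min(a)$ are pairwise comaximal. The one point you flagged, namely $\sqrt{a}=p_1\wedge\cdots\wedge p_n$ when $Min(a)=\{p_1,\ldots,p_n\}$, does hold here (every prime above $a$ dominates a minimal one, since the meet of a chain of primes is prime --- primeness being testable on compact elements in a compactly generated lattice), and the paper uses this fact just as implicitly as you do.
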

\begin{proof} 
$(i)$ We use repeatedly Theorem \ref{5}; we may suppose that $a=\sqrt{a}$.
If $a=p_1\wedge \cdots \wedge p_n$ is a CPR-factorization of $a$, then 
 $p_1$,...,$p_n$ are pairwise comaximal primes, hence $Min(a)=\{p_1,...,p_n\}$ and the CPR-factorization of $a$ is unique. Conversely, if 
$Min(a)=\{q_1,...,q_m\}$ and $q_1,...,q_m$ are pairwise comaximal, the CPR-factorization of $a$ is $q_1\wedge \cdots \wedge q_m$.

$(ii)$ follows from $(i)$ and the fact that $Min(p\wedge q)=\{p,q\}$ whenever $p,q$ are two incomparable primes.
\end{proof}

\begin{corollary} \label{20}
If $L$ is a treed lattice, then  the set of elements in $L$ having a CPR-factorization is closed under products, finite meets and finite joins.
\end{corollary}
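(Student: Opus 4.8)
The plan is to reduce everything to a statement about minimal primes and then exploit the combined force of Theorem \ref{4} and the tree hypothesis. Recall that in a treed lattice the elements of $Min(x)$ are automatically pairwise comaximal, so by Theorem \ref{4}(i) a proper element $x$ has a (unique) CPR-factorization if and only if $Min(x)$ is finite. Thus, writing $S$ for the set of elements with a CPR-factorization, I would show that whenever $a,b\in S$ (equivalently $Min(a)$ and $Min(b)$ are finite), each of $ab$, $a\wedge b$, $a\vee b$ is either equal to $1$ or again has a finite set of minimal primes. Since $ab\le a\wedge b\le a<1$, the product and the meet are always proper, whereas $a\vee b$ may equal $1$ (for instance when $a,b$ are comaximal), in which case nothing is to be proved; otherwise I must establish finiteness of $Min(a\vee b)$. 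The general (finitely iterated) versions then follow by induction on the number of factors.

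For products and meets the key observation is that $\sqrt{ab}=\sqrt{a\wedge b}=\sqrt a\wedge\sqrt b$, so $ab$ and $a\wedge b$ have exactly the same primes above them, and a prime lies above $ab$ iff it lies above $a$ or above $b$. From this I would argue $Min(ab)=Min(a\wedge b)\subseteq Min(a)\cup Min(b)$: if $p\in Min(ab)$ then, say, $p\ge a$, and were $p$ not minimal over $a$ there would exist a prime $q$ with $a\le q<p$, forcing $q\ge ab$ and contradicting the minimality of $p$. Hence both sets of minimal primes are finite, and since the elements are proper they lie in $S$. The tree hypothesis enters here only through the reduction of the first paragraph (to guarantee pairwise comaximality); the finiteness itself does not need it.

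The essential point, and the step I expect to be the main obstacle, is the join, where the tree hypothesis does real work. I would take $p\in Min(a\vee b)$, so that $p\ge a$ and $p\ge b$, and show $p\in Min(a)\cup Min(b)$. Suppose not: then there are primes $q$ with $a\le q<p$ and $r$ with $b\le r<p$. Since $q$ and $r$ both lie below the prime $p$, the tree property forces them to be comparable; assuming $q\le r$ (the case $r\le q$ being symmetric) one gets $r\ge a\vee b$ with $r<p$, contradicting the minimality of $p$. Therefore $Min(a\vee b)\subseteq Min(a)\cup Min(b)$ is finite, and when $a\vee b\ne 1$ this element again belongs to $S$, which completes the argument.
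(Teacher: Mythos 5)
Your proof is correct and takes essentially the same route as the paper: both reduce, via Theorem \ref{4} and the treed hypothesis, to showing $Min(ab)=Min(a\wedge b)\subseteq Min(a)\cup Min(b)$ and $Min(a\vee b)\subseteq Min(a)\cup Min(b)$, with the tree property used exactly where you use it, to compare two primes lying below $p$ in the join case. Your only deviations are cosmetic and harmless: you run the join case by contradiction with arbitrary primes $q,r<p$ (avoiding the paper's implicit appeal to the existence of minimal primes $p_1\in Min(a)$, $p_2\in Min(b)$ below $p$), and you explicitly treat the edge case $a\vee b=1$, which the paper leaves implicit.
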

\begin{proof} 
Let $\Gamma$ be  the set of elements in $L$ having a CPR-factorization.
By the proof of Theorem \ref{4},  $x\in \Gamma$  iff $Min(x)$ is finite. Let $x,y\in \Gamma$. We prove that $Min(x\vee y)\subseteq  Min(x)\cup Min(y)$, so $Min(x\vee y)$ is finite. Let $p\in Min(x\vee y)$. Then $p_1\vee p_2\leq p$ for some $p_1\in Min(x)$ and $p_2\in Min(y)$. As $L$ is treed, we may assume that $p_1\leq p_2$. Then $x\vee y\leq p_2\leq p$, so $p=p_2$ since $p$ is minimal over $x\vee y$. 

The fact that $Min(xy)=Min(x\wedge y)\subseteq  Min(x)\cup Min(y)$ follows easily from definitions. So $xy,\ x\wedge y\in \Gamma$.
\end{proof}

Let $D$ be an integral domain and $M$ its ideal lattice. By \cite[Theorem 1]{BH}, $M$ is treed provided every proper principal ideal of $D$ has a CPR-factorization. We  extend that result.

\begin{theorem}\label{11} 
Let $L$ be a lattice and $G\subseteq L$ a generating set for $L$. 
If  $g_1g_2$ has a CPR-factorization for every $g_1,g_2\in G-\{1\}$, then $L$ is a  treed lattice.
\end{theorem}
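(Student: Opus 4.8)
The plan is to use the characterization of treed lattices recorded just before the statement: it suffices to show that any two incomparable prime elements $p,q\in Spec(L)$ are comaximal, i.e. $p\vee q=1$. So I would fix two incomparable primes $p$ and $q$ and aim to produce a product $g_1g_2$ of generators whose (comaximal) minimal primes separate $p$ from $q$.

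First I would extract suitable generators. Since $G$ generates $L$ and $q\not\leq p$, it cannot happen that $g\leq p$ for every $g\in G$ with $g\leq q$, for otherwise $q=\bigvee\{g\in G:g\leq q\}\leq p$; hence there is $g_1\in G$ with $g_1\leq q$ and $g_1\not\leq p$. Symmetrically there is $g_2\in G$ with $g_2\leq p$ and $g_2\not\leq q$. Both are proper, as $g_1\leq q<1$ and $g_2\leq p<1$, so $g_1,g_2\in G-\{1\}$ and by hypothesis $g_1g_2$ has a CPR-factorization. Moreover $g_1g_2\leq g_2\leq p$ and $g_1g_2\leq g_1\leq q$, so $g_1g_2\leq p\wedge q$.

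By Theorem \ref{4}$(i)$ the set $Min(g_1g_2)$ is finite and its elements are pairwise comaximal. Since $g_1g_2\leq p$ there is a minimal prime $p_0\in Min(g_1g_2)$ with $p_0\leq p$, and likewise some $q_0\in Min(g_1g_2)$ with $q_0\leq q$. Using primeness I would pin down which generator each of these dominates: because $g_1\not\leq p$ forces $g_1\not\leq p_0$, the relation $g_1g_2\leq p_0$ yields $g_2\leq p_0$; symmetrically $g_1\leq q_0$.

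The crux, and the step I expect to be the only real obstacle, is to rule out the ``upward branching'' case $p_0=q_0$. If $p_0$ and $q_0$ coincided to a single prime $r\leq p\wedge q$, then the two containments above would give both $g_1\leq r$ and $g_2\leq r$; but $r\leq p$ together with $g_1\leq r$ would force $g_1\leq p$, contradicting the choice of $g_1$. Hence $p_0\neq q_0$, so they are distinct members of $Min(g_1g_2)$ and therefore comaximal, $p_0\vee q_0=1$. Since $p_0\leq p$ and $q_0\leq q$, this gives $p\vee q\geq p_0\vee q_0=1$, i.e. $p\vee q=1$, as required.
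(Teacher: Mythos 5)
Your proof is correct and takes essentially the same route as the paper: both arguments choose generators $g_1\leq q$, $g_1\not\leq p$ and $g_2\leq p$, $g_2\not\leq q$ and exploit the pairwise comaximality of the CPR-factors of $g_1g_2$ — your minimal primes $p_0,q_0$ are exactly the radicals $\sqrt{a_i},\sqrt{a_j}$ of the paper's factors, and your exclusion of $p_0=q_0$ is the paper's $i=j$ contradiction, merely phrased directly instead of by contradiction and mediated by Theorem \ref{4}$(i)$. The one ingredient you use implicitly, that any prime above $g_1g_2$ dominates some element of $Min(g_1g_2)$, is the standard Zorn/chain argument the paper itself uses tacitly (e.g.\ in Corollary \ref{20}), so there is no gap.
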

\begin{proof} 
Suppose that $L$ is not  treed, so  $p_1\vee p_2\neq 1$ for two incomparable primes   $p_1$ and $p_2$. Since $G$ generates $L$, there exist $g_1,g_2\in G$ such that 
$$g_1\leq p,\ \  g_1\not\leq p_2,\ \  g_2\leq p_2,\ \  g_2\not\leq p_1$$
and let $g_1g_2=a_1\cdots a_n$ be a CPR-factorization of $g_1g_2$. Since $g_1g_2\leq p_1p_2$ and $p_1,p_2$ 
are prime, we get that $a_i\leq p_1$ and $a_j\leq p_2$ for some $i,j$. As $p_1\vee p_2\neq 1$ and $a_1,...,a_n$ are pairwise comaximal, we get $i=j$. As $g_1g_2\leq a_i$ and $\sqrt{a_i}$ is prime, we may assume that $g_1\leq \sqrt{a_i}$. 
Then $g_1\leq \sqrt{a_i} \leq p_2$ which is a contradiction.
\end{proof}


\begin{corollary} \label{6}  
Let $L$ be a lattice and $G\subseteq L$ a generating set for $L$ consisting of compact elements.
The following  are equivalent.

$(i)$  Every compact element   $k\in L-\{1\}$ has a CPR-factorization

$(ii)$ $g_1g_2$ has a CPR-factorization for every $g_1,g_2\in G-\{1\}$

$(iii)$ $L$ is treed and $Min(k)$ is finite for every compact element $k\in L$.

$(iv)$ $L$ is treed and $Min(g)$ is finite for every $g\in G$.
\end{corollary}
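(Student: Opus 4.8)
The plan is to establish the cyclic chain of implications $(i)\Rightarrow(ii)\Rightarrow(iv)\Rightarrow(iii)\Rightarrow(i)$, drawing on Theorems \ref{4} and \ref{11} together with the minimal-prime computation contained in the proof of Corollary \ref{20}. For $(i)\Rightarrow(ii)$, given $g_1,g_2\in G-\{1\}$ I first note that $g_1g_2$ is compact, since the product of two compact elements is compact, and that it is proper: from $g_1g_2\leq g_1\wedge g_2$ one sees that $g_1g_2=1$ forces $g_1=g_2=1$. Hence $(i)$ applies directly and yields a CPR-factorization of $g_1g_2$.

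For $(ii)\Rightarrow(iv)$, treedness is immediate from Theorem \ref{11}, because $G$ generates $L$ and every product $g_1g_2$ with $g_1,g_2\in G-\{1\}$ has a CPR-factorization by hypothesis. To obtain finiteness of $Min(g)$ for $g\in G$, I apply $(ii)$ with $g_1=g_2=g$ (the case $g=1$ being trivial, as $Min(1)=\emptyset$): then $g^2$ has a CPR-factorization, so Theorem \ref{4}$(i)$ shows $Min(g^2)$ is finite, and $Min(g^2)=Min(g)$ since $\sqrt{g^2}=\sqrt{g}$. For $(iv)\Rightarrow(iii)$, let $k\in L$ be compact. Since $G$ generates $L$, the element $k$ is the join of some subset of $G$, and compactness of $k$ lets me cut this down to a finite join $k=g_1\vee\cdots\vee g_m$ with each $g_i\in G$. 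The key input is the containment $Min(x\vee y)\subseteq Min(x)\cup Min(y)$, valid in any treed lattice and proved inside Corollary \ref{20}; iterating it gives $Min(k)\subseteq Min(g_1)\cup\cdots\cup Min(g_m)$, which is finite by $(iv)$.

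Finally, for $(iii)\Rightarrow(i)$, take a compact $k\in L-\{1\}$. Treedness makes the elements of $Min(k)$ pairwise comaximal, and $(iii)$ guarantees $Min(k)$ is finite, so Theorem \ref{4}$(i)$ produces the desired CPR-factorization of $k$ (which is moreover unique). I expect the main obstacle to be the step $(iv)\Rightarrow(iii)$: it is where compactness of $k$ is genuinely used, first to reduce $k$ to a finite join of generators and then to control the minimal primes of that join through the treed-lattice containment; the remaining implications are short consequences of the already-established Theorems \ref{4} and \ref{11}.
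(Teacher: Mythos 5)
Your proof is correct and is built from the same ingredients as the paper's: Theorem \ref{11} for treedness, the identity $Min(g^2)=Min(g)$ applied to $g^2$ having a CPR-factorization, Theorem \ref{4}$(i)$ for existence, and the containment $Min(x\vee y)\subseteq Min(x)\cup Min(y)$ extracted from the proof of Corollary \ref{20}. Where you differ is in organization, and here your version is actually tighter than the printed one. The paper handles $(i)\Rightarrow(ii)$ and $(iii)\Rightarrow(iv)$ as obvious, gets $(ii)\Rightarrow(iv)$ from Theorem \ref{11}, gets $(iv)\Rightarrow(ii)$ from Corollary \ref{20}, and then closes with the sentence ``$((i)$ and $(iv))\Rightarrow (ii)$ by Theorem \ref{4}'', which as literally written never returns to $(i)$ or $(iii)$ and so does not close the cycle (it is presumably a garbled version of $(iv)\Rightarrow((i)$ and $(iii))$). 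Your chain $(i)\Rightarrow(ii)\Rightarrow(iv)\Rightarrow(iii)\Rightarrow(i)$ supplies exactly the missing implications explicitly: you correctly observe that a compact $k\neq 0$ equals a finite join $g_1\vee\cdots\vee g_m$ of generators (compactness of $k$ cuts the join down, and the reverse inequality is automatic since each $g_i\leq k$), and you correctly note that the $Min$-containment needs only treedness, not membership in the set $\Gamma$ of Corollary \ref{20}. Your justification in $(i)\Rightarrow(ii)$ — that $g_1g_2$ is compact by condition $(iii)$ in the definition of lattice (this is where the hypothesis that $G$ consists of compact elements enters) and proper since $g_1g_2\leq g_1\wedge g_2$ — is also the right one, and the paper leaves it implicit.

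One blind spot, which you share with the paper, deserves patching: the bottom element $0$ is compact in any complete lattice (take the empty subfamily in the definition of compactness), so statements $(i)$ and $(iii)$ genuinely assert that $Min(0)$ is finite. In your step $(iv)\Rightarrow(iii)$, if $k=0$ and $0\notin G$, the decomposition $k=g_1\vee\cdots\vee g_m$ degenerates to the empty join and the iterated containment yields nothing, so $(iv)$ does not visibly control $Min(0)$. The paper glosses over the same point; indeed the proof of Theorem \ref{12} invokes ``$0\in G$'' for an arbitrary generating set $G$, which suggests the authors tacitly assume generating sets contain $0$. Under that convention — or if one restricts to lattice domains, where $Min(0)=\{0\}$ — your argument is complete; otherwise the case $k=0$ should be addressed separately in $(iv)\Rightarrow(iii)$ (and correspondingly in $(ii)\Rightarrow(i)$).
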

 \begin{proof}
The implications $(i)\Rightarrow (ii)$  and $(iii)\Rightarrow (iv)$ are obvious. $(ii)\Rightarrow (iv)$ follows from Theorem \ref{11} and the fact that $Min(g^2)=Min(g)$ is finite when $g^2$ has a CPR-factorization.
$(iv)\Rightarrow (ii)$ follows from Corollary  \ref{20}. Finally 
$((i)$ and $(iv))\Rightarrow (ii)$ by Theorem \ref{4}.
 \end{proof}
Using the translation done in \cite[Section 3]{AC}, we can see that \cite[Theorem 2.1]{EGZ} is a consequence of the above corollary.

Recall that a {\em Bezout domain} is a domain whose finitely generated ideals are principal. In \cite[Example 7]{BH}, the authors constructed a Bezout domain $D$ having the following two properties: 

$(1)$ Each finitely generated  ideal of $D$ has only finitely many minimal primes,

$(2)$ $D$ has an  ideal having infinitely many minimal primes. 
\\
It is well-known that the spectrum of a Bezout domain is a tree under inclusion. So the ideal lattice $K$ of $D$ satisfies 
 condition $(iii)$ in Corollary \ref{6}, but $K$ is a not a CPR-lattice.
Getting inspiration from \cite[Proposition 4]{BH}, we exhibit a case when 
we do obtain a CPR-lattice provided the hypothesis of  Theorem \ref{11} holds.

\begin{theorem} \label{12} 
Let $L$ be a lattice and $G\subseteq L$  a generating set for $L$. 
Suppose that the following three conditions  hold.

$(1)$ Every non-minimal prime element  of $L$  is below only finitely many maximal elements. 

$(2)$ If $a\in L$ and $p_1,...,p_n\in Spec(L)$ such that 
$a\not\leq p_i$ for $i=1,...,n$, there exists some $g\in G$ such that 
$g\leq a$ and $g\not\leq p_i$ for $i=1,...,n$.

$(3)$  $gh$ has a CPR-factorization for every $g,h\in G-\{1\}$.
\\
Then $L$ is a CPR-lattice.
\end{theorem}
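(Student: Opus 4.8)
The plan is to show that every proper element $a\in L-\{1\}$ has a CPR-factorization. By Theorem \ref{4}(i), it suffices to prove that $Min(a)$ is finite and that its elements are pairwise comaximal. Since hypothesis $(3)$ together with Theorem \ref{11} already forces $L$ to be treed, the comaximality of the elements of $Min(a)$ is automatic; so the whole burden is to establish that $Min(a)$ is \emph{finite} for every proper $a$. I would fix a proper element $a$ and argue by separating its minimal primes into two classes: those that are themselves minimal elements of $Spec(L)$ and those that are not.

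First I would handle the non-minimal primes in $Min(a)$. The idea is that each such prime $p$ lies below some maximal element, and by hypothesis $(1)$ every non-minimal prime sits below only finitely many maximal elements; dually, I want to control how many minimal primes of $a$ can appear under a single maximal element $m$. Passing to the localization at $m$ (or the interval $[0,m]$, viewed as a lattice), a minimal prime of $a$ below $m$ becomes a minimal prime in that smaller, now treed, context where the chain structure forces it to be comparable to the rest; since the spectrum is a tree, below a fixed $m$ the primes form a chain, so there can be at most one minimal prime of $a$ lying under $m$ that is comparable to $m$ in the relevant sense. Combined with $(1)$, this should bound the number of non-minimal members of $Min(a)$ by a finite count tied to the finitely many maximal elements involved.

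The more delicate part, and what I expect to be the main obstacle, is ruling out infinitely many \emph{minimal primes of $L$} among $Min(a)$. This is exactly the phenomenon that the Bezout example preceding the theorem exhibits, so conditions $(2)$ and $(3)$ must be used essentially here. The strategy is a compactness/finite-generation argument: suppose $Min(a)$ contains infinitely many primes $p_1,p_2,\dots$. Using the generating set $G$ and condition $(2)$, I would extract from $a$ a compact generator $g\leq a$ that avoids a suitably chosen finite collection of primes, so that $g$ witnesses membership in only finitely many of the $p_i$; then condition $(3)$ gives $g h$ (or $g^2$) a CPR-factorization, whence by Theorem \ref{4}(i) the set $Min(g)$ is finite. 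The plan is to engineer the choices in $(2)$ so that $Min(a)\subseteq Min(g)$, or at least so that any minimal prime of $a$ must lie over $g$; this contradicts the finiteness of $Min(g)$ and forces $Min(a)$ to be finite.

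Assembling the two bounds, I would conclude that $Min(a)$ is finite for every proper $a\in L-\{1\}$, and then invoke Theorem \ref{4}(i) (using tree\-dness from Theorem \ref{11} for the pairwise comaximality) to produce the CPR-factorization, so $L$ is a CPR-lattice. The two steps where I anticipate real friction are, first, correctly reducing a minimal prime of $a$ under a fixed maximal element to a \emph{unique} such prime via the tree structure, and second, the precise way condition $(2)$ is deployed to force $Min(a)\subseteq Min(g)$ for a single compact $g\leq a$; the latter is the crux, since without it the infinitely-many-minimal-primes pathology of the Bezout example is not excluded.
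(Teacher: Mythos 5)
Your reduction (treedness from Theorem \ref{11}, then finiteness of $Min(a)$ via Theorem \ref{4}) matches the paper, but both halves of your case split contain genuine gaps, and the split itself inverts where the difficulty lies. In the branch of non-minimal primes in $Min(a)$, hypothesis $(1)$ bounds, for each \emph{fixed} non-minimal prime, the number of maximal elements above it; it does not bound the number of non-minimal primes in $Min(a)$. Your injection $p\mapsto p'$ (one maximal element above each $p$, injective by treedness, and correct as far as it goes) lands in a possibly infinite set of maximal elements, so there is no ``finite count tied to the finitely many maximal elements involved'' --- that phrase presupposes exactly what is to be proved. What closes this branch (and is the heart of the paper's proof) is the missing pull-down step: assuming $Min(a)$ infinite, use $(2)$ to pick $h\in G$ with $h\leq a$ and $h\not\leq q$ for the finitely many $q\in Min(0)-Min(a)$; by $(3)$, $Min(h)=Min(h^2)$ is finite; by treedness each maximal $p'$ with $p\in Min(a)-Min(0)$ lies above a \emph{unique} $\dot p\in Min(h)$; the choice of $h$ forces $\dot p\notin Min(0)$ (otherwise $\dot p\in Min(a)$, giving two distinct minimal primes of $a$ below $p'$, contradicting comaximality); and then the pigeonhole applied to the infinite set of $p'$'s produces a single $q\in Min(h)-Min(0)$ lying below infinitely many maximal elements, contradicting $(1)$. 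None of this appears in your sketch, and your direct counting via $(1)$ cannot replace it.

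Conversely, what you identify as the crux --- engineering $(2)$ so that $Min(a)\subseteq Min(g)$ for a single $g\leq a$ --- is both unattainable and unnecessary. Unattainable: $(2)$ only lets $g$ avoid finitely many primes \emph{not} lying over $a$, and no choice can make infinitely many primes minimal over a $g$ whose $Min(g)$ is finite; if this were possible, hypothesis $(1)$ would be superfluous, and the Brewer--Heinzer Bezout example shows it is not. Unnecessary: the primes of $Min(a)$ that are minimal in $Spec(L)$ form the \emph{easy} part, since for any $g\leq a$ a prime $p\in Min(a)\cap Min(0)$ lies over $g$ and, being a minimal element of $Spec(L)$, is automatically minimal over $g$; hence $Min(a)\cap Min(0)\subseteq Min(g)$ is finite (the paper disposes of this branch even faster, noting $Min(0)$ is finite). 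Note also that the Bezout pathology you invoke lives in the other branch: in a domain the unique minimal prime is $0$, so an ideal with infinitely many minimal primes has infinitely many \emph{non-minimal} ones --- precisely the case your first step fails to exclude. (A small further slip: $G$ is not assumed to consist of compact elements in this theorem, so ``compact generator $g$'' is unwarranted, though nothing in the correct argument needs compactness.)
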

\begin{proof} 
By Theorems \ref{11} and \ref{4}, it follows that $L$ is a treed lattice 
and $Min(g^2)=Min(g)$ is finite for all $g\in G$.
By Theorem \ref{4}, it suffices to show that  $Min(x)$ is finite for all 
$x\in L-\{1\}$.
Suppose on the  contrary that $Min(a)$ is infinite for some $a\in L-\{1\}$. Then $Min(0)\not\subseteq Min(a)$, otherwise we get $Min(a)=Min(0)$ which is a contradiction since $0\in G$ so $Min(0)$ is finite. 
 
By $(2)$, there exists   some $h\in G$ such that $h\leq a$ and $h\not\leq q$ for all $q\in Min(0)-Min(a)$. 
For each $p\in Min(a)-Min(0)$, pick  a maximal element $p'\geq p$. Note that $p'=q'$ implies $p=q$ because $L$ is treed. 
We get the infinite set
  $$\Gamma=\{p'\ |\ p\in Min(a)-Min(0) \}.$$ 
Let $p'\in \Gamma$. Then $h\leq a\leq p'$, so $\dot p\leq p'$ for a unique element $\dot p\in Min(h)$, because $L$ is treed. Note that $\dot p\notin Min(0).$
Indeed, if $\dot p\in Min(0)$, then $\dot p\in Min(a)$ by our choice of $h$, so $p$ and $\dot p$ are distinct elements of $Min(a)$, thus getting the contradiction $1=p\vee \dot p\leq p'$. 

Consider  the map 
 $f:\Gamma\rightarrow Min(h)-Min(0)$  given by $f(p')=\dot p$. As $\Gamma$ is infinite while $Min(h)$ is finite, there exits some $q\in Min(h)-Min(0)$ with $f^{-1}(q)$ infinite. Hence $q$ is a non-minimal prime element which is   below infinitely many maximal elements, thus contradicting hypothesis $(1)$.
\end{proof}

\begin{corollary}  
Let $L$ be a lattice such that every non-minimal prime element  of $L$  is below only finitely many maximal elements. 
If every compact element $c\in L-\{1\}$ has a CPR-factorization, then $L$ is a CPR-lattice.
\end{corollary}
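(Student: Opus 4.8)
The plan is to reduce the corollary directly to Theorem \ref{12} by taking $G$ to be the set of all compact elements of $L$. By condition $(ii)$ in the definition of a lattice, this $G$ generates $L$, so it is an admissible generating set, and the standing hypothesis of the corollary is literally condition $(1)$ of Theorem \ref{12}. The whole task therefore collapses to checking conditions $(2)$ and $(3)$ of Theorem \ref{12} for this particular $G$.

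I would dispatch condition $(3)$ first, since it is immediate. For compact $g,h\in G-\{1\}$, the product $gh$ is compact by condition $(iii)$ in the definition of a lattice; moreover $gh\leq g\cdot 1=g$, and since $g\neq 1$ this forces $gh\neq 1$. Hence $gh$ is a compact element of $L-\{1\}$, and so it has a CPR-factorization by the hypothesis that every compact element of $L-\{1\}$ does. That is exactly condition $(3)$.

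The slightly more substantial step is condition $(2)$. Given $a\in L$ and primes $p_1,\dots,p_n$ with $a\not\leq p_i$ for every $i$, I would exploit that $a$ is the join of the compact elements lying beneath it (condition $(ii)$ again) to produce, for each $i$, a compact $c_i\leq a$ with $c_i\not\leq p_i$; indeed, if every compact below $a$ were $\leq p_i$, their join $a$ would satisfy $a\leq p_i$, contradicting the assumption. Setting $g=c_1\vee\cdots\vee c_n$ then yields an element with $g\leq a$ and $g\not\leq p_i$ for all $i$ (because $c_i\leq g$), and $g$ is compact since a finite join of compact elements is again compact.

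With $(1)$, $(2)$ and $(3)$ verified, Theorem \ref{12} immediately gives that $L$ is a CPR-lattice. I expect the only genuine point requiring an argument to be condition $(2)$, and within it the single fact worth recording is that a finite join of compact elements is compact; everything else is a routine translation of the hypotheses into the format demanded by Theorem \ref{12}.
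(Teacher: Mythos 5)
Your proposal is correct and is exactly the route the paper intends: the corollary is stated immediately after Theorem \ref{12} with no written proof, precisely because it is the specialization of that theorem to $G$ being the set of all compact elements, whose conditions $(1)$--$(3)$ you verify correctly (including the two small points worth checking: $gh$ is compact and $\neq 1$ since $gh\leq g<1$, and condition $(2)$ follows because every element is the join of the compact elements below it, with a finite join of compacts again compact). Nothing is missing; your write-up simply makes explicit the routine verifications the paper leaves to the reader.
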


\begin{remark}\label{13}  
Note that hypothesis $(2)$ in Theorem \ref{12} holds automatically if $L$ has only one minimal prime element, e.g. when $L$ is a lattice domain.
\end{remark}

We switch to CQ-lattices (see Definition \ref{14}).

\begin{theorem} \label{17}
For a lattice $L$, the following are equivalent:

$(i)$ $L$ is a CQ-lattice.

$(ii)$ $L$ is a CPR-lattice and every element of $L$ with prime radical is primary.
\end{theorem}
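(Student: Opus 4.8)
The plan is to prove the equivalence of the two characterizations of CQ-lattices by showing each direction separately, using Theorem \ref{4} to control the CPR-factorization structure throughout.

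For the direction $(i)\Rightarrow(ii)$, suppose $L$ is a CQ-lattice. First I would observe that every CQ-factorization is automatically a CPR-factorization: if $a=a_1\cdots a_n$ with each $a_i$ primary, then each $\sqrt{a_i}$ is prime (the radical of a primary element is always prime, a fact I would verify directly from the definition of primary given in the excerpt), so $L$ is a CPR-lattice. The substantive part is to show that every element $b\in L-\{1\}$ with prime radical $\sqrt{b}=p$ is primary. Here I would take a CQ-factorization $b=b_1\cdots b_n$ with the $b_i$ pairwise comaximal and primary. By Theorem \ref{4}(i), the CPR-factorization of $b$ is unique and its factors have radicals equal to the distinct minimal primes of $b$; but $\sqrt{b}=p$ is prime, so $Min(b)=\{p\}$ is a singleton. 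This forces $n=1$, whence $b=b_1$ is primary. The key leverage is thus the uniqueness clause of Theorem \ref{4}(i), which collapses the factorization to a single factor precisely when the radical is prime.

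For the direction $(ii)\Rightarrow(i)$, suppose $L$ is a CPR-lattice in which every element with prime radical is primary. Given $x\in L-\{1\}$, I would invoke the CPR-factorization $x=a_1\cdots a_n$ with the $a_i$ pairwise comaximal and each $\sqrt{a_i}$ prime. The claim is that this is already a CQ-factorization, i.e.\ that each $a_i$ is primary. Since $\sqrt{a_i}$ is prime by hypothesis $(ii)$, the standing assumption forces each $a_i$ to be primary, and the factorization is a CQ-factorization. Thus $x$ has a CQ-factorization and $L$ is a CQ-lattice.

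The step I expect to be the main obstacle is the $(i)\Rightarrow(ii)$ collapse argument, specifically ensuring that having a CQ-factorization really does yield a CPR-factorization to which the uniqueness of Theorem \ref{4}(i) applies. The delicate point is that Theorem \ref{4} characterizes CPR-factorizations in terms of $Min(a)$, so I must be careful that the radicals $\sqrt{a_i}$ of the primary factors are exactly the minimal primes of $b$ and are pairwise comaximal (which follows from the $a_i$ being comaximal via Lemma \ref{1}(ii)). Once the identification $Min(b)=\{\sqrt{b_1},\dots,\sqrt{b_n}\}$ is justified and $\sqrt{b}$ is seen to be the meet of these, the hypothesis that $\sqrt{b}$ is prime pins down a single minimal prime, and the rest is routine.
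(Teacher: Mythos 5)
Your proposal is correct and takes essentially the same route as the paper: the paper's terse proof likewise gets $(ii)\Rightarrow(i)$ straight from the definitions (the CPR-factors have prime radicals, hence are primary under hypothesis $(ii)$), and for $(i)\Rightarrow(ii)$ it invokes exactly your collapse argument, namely that by the uniqueness in Theorem \ref{4} the CQ-factorization of an element $a$ with prime radical must be the trivial one $a=a$. Your additional checks---that a CQ-factorization is a CPR-factorization because the radical of a primary element is prime, and that $Min(b)=\{\sqrt{b_1},\dots,\sqrt{b_n}\}$ pins down the number of factors---merely make explicit what the paper leaves implicit in display $(*)$ and in the proof of Theorem \ref{4}.
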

\begin{proof} 
$(ii)\Rightarrow (i)$ is clear from definitions. $(i)\Rightarrow (ii)$. If $a\in L$ has prime radical, then $a$ is primary because its unique CQ-factorization (see Theorem \ref{4}) is $a=a$.
\end{proof}

\begin{corollary} \label{18}
For a lattice domain $L\neq \{0,1\}$ generated by compact join principal elements, the following are equivalent:

$(i)$ $L$ is a CQ-lattice.

$(ii)$ $L$ is one-dimensional  and $Min(a)$ is finite for each $a\in L$.
\end{corollary}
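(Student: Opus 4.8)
The plan is to deduce both implications from the characterisation in Theorem \ref{17} together with Theorem \ref{4}: combining them, $L$ is a CQ-lattice iff $L$ is treed, $Min(a)$ is finite for every $a$, and every element with prime radical is primary. So in each direction I only have to manufacture or exploit these three conditions.

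For $(ii)\Rightarrow(i)$, which I expect to be routine, I would first check that $L$ is treed. As $L\neq\{0,1\}$ is a one-dimensional domain, $0$ is the unique minimal prime and every nonzero prime is maximal; hence two incomparable primes are both nonzero, both maximal, and therefore comaximal (distinct maximal elements join to $1$). Since $Min(a)$ is finite by hypothesis, Theorem \ref{4} gives that $L$ is a CPR-lattice. It then remains to see that an element $a$ with $\sqrt a=p$ prime is primary. If $p=0$ then $a\le\sqrt a=0$, so $a=0$ is prime, hence primary; if $p\neq 0$ then $p$ is maximal, and for any $y\not\le\sqrt a=p$ we have $y\vee p=1$, whence $y\vee a=1$ by Lemma \ref{1}(ii) and $(a:y)=a$ by Lemma \ref{1}(i), so $xy\le a$ forces $x\le a$. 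Theorem \ref{17} now yields $(i)$.

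For $(i)\Rightarrow(ii)$, Theorems \ref{17} and \ref{4} immediately give that $L$ is treed, that $Min(a)$ is finite for all $a$ (half of $(ii)$), and that every prime-radical element is primary. The only remaining point is $\dim L\le 1$. Suppose not, so there is a chain $0<p<q$ of primes; I would derive a contradiction by building a prime-radical element that is not primary. The engine is the cancellation available for the generators: if $h$ is a nonzero compact join-principal element of a domain then $(hz:h)=z$ for all $z$, since $(0:h)=0$. Pick a nonzero compact join-principal $h\le p$ and a compact join-principal $g\le q$ with $g\not\le p$. Then $hg$ is not primary: $h\not\le hg$ (otherwise $hg=h$, forcing $g=(hg:h)=1$), while $g\not\le\sqrt{hg}\le\sqrt h\le p$. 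The trouble is that $\sqrt{hg}=\sqrt h\wedge\sqrt g$ need not be prime, so $hg$ by itself does not contradict $(i)$ — in a CQ-lattice a non-primary element with non-prime radical is perfectly admissible.

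The main obstacle is thus the radical control: turning $hg$ into a genuine prime-radical non-primary element. Here I would use that $L$ is treed with finite $Min$, so $Min(hg)$ is finite and pairwise comaximal; by a treed/minimality argument it has a unique member $p^{*}\le q$, and one checks $p^{*}\le p$, while the other members $R_1,\dots,R_s$ are comaximal to $p^{*}$. Choosing a compact $f\le R_1\cdots R_s$ with $f\not\le p^{*}$ and forming the saturation $c=\bigvee_{k\ge 1}(hg:f^{k})$, Lemma \ref{9}(ii) gives $\sqrt c=(\sqrt{hg}:f)=p^{*}$, which is prime. The delicate and genuinely load-bearing step is to verify that $c$ is still not primary — i.e. that saturating away the components at the $R_i$ (all comaximal to $p^{*}$) does not destroy the witness $g$ with $g\not\le p^{*}$ — after which $c$ is a prime-radical non-primary element contradicting Theorem \ref{17} and forcing $\dim L\le 1$. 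I expect this radical-control argument, where the treed hypothesis, finiteness of $Min$, and the join-principal cancellation all come together, to be the crux of the proof.
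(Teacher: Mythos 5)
Your $(ii)\Rightarrow(i)$ is correct and is in substance the paper's own argument: the paper proves this direction via Lemma \ref{24} (one-dimensionality forces every nonzero prime to be maximal, Theorem \ref{5} produces the factorization $a=a_1\cdots a_n$ with $\sqrt{a_i}=m_i$, and an element with maximal radical is primary by exactly the comaximality computation you give); routing it through Theorem \ref{4} plus Theorem \ref{17} instead is an inessential variation.

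In $(i)\Rightarrow(ii)$ you genuinely depart from the paper: for the only nontrivial point, $\dim L\le 1$, the paper simply cites \cite[Corollary 3.4]{AAJ} (a lattice domain generated by compact join principal elements in which every prime-radical element is primary is one-dimensional), whereas you reconstruct this fact from scratch by a saturation argument in the style of Theorem \ref{5} and \cite[Theorem 8]{BH}. Your construction is sound, but your write-up stops at its own crux: you only \emph{announce} that $c=\bigvee_{k\ge1}(hg:f^k)$ fails to be primary. That step in fact closes in two lines from the cancellation you already set up, provided you choose $f$ compact with $f\le R_1\cdots R_s$ and $f\vee p^{*}=1$ (possible because $1$ is compact, exactly as the $c_i$ are chosen in the proof of Theorem \ref{5}) rather than merely $f\not\le p^{*}$; the full comaximality is what makes $(p^{*}:f)=p^{*}$, $(R_i:f)=1$, hence $\sqrt c=(\sqrt{hg}:f)=p^{*}$, and it is also needed at the end. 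Indeed $hg\le(hg:f)\le c$ and $g\not\le p^{*}=\sqrt c$ (since $p^{*}\le p$ and $g\not\le p$), so it suffices to show $h\not\le c$. If $h\le c$ then, $h$ being compact and the sequence $(hg:f^k)$ increasing in $k$, we get $hf^k\le hg$ for some $k$, whence $f^k\le(hg:h)=g\vee(0:h)=g\le q$, so $f\le\sqrt g\le q$; but $p^{*}\le q$ then gives $1=f\vee p^{*}\le q$, absurd. (If $Min(hg)=\{p^{*}\}$, take $f=1$.) Thus $c$ is a prime-radical non-primary element, contradicting Theorem \ref{17}, and $L$ is one-dimensional. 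With this inserted your proof is complete, and it buys self-containedness where the paper leans on an external citation. Note also that the hard direction follows at once from the paper's own Theorem \ref{15}: for compact join principal $a,b$ with $a\neq0$ in a domain, $(ab:a)=b\vee(0:a)=b\le\sqrt b$, so the set of compact join principal generators satisfies the hypothesis there.
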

\begin{proof} 
$(i)\Rightarrow (ii)$. By \cite[Corollary 3.4]{AAJ}, a lattice domain $L$ is one-dimensional provided $L$ is generated by compact join principal elements and every element of $L$ with prime radical is primary.
So the assertion follows combining Theorems \ref{17}, \ref{4} and \cite[Corollary 3.4]{AAJ}.

$(ii)\Rightarrow (i)$ is covered by the following simple lemma.
\end{proof}

\begin{lemma}\label{24}
If $L$ is a one-dimensional lattice domain and $Min(a)$ is finite for each $a\in L$, then $L$ is a CQ-lattice.
\end{lemma}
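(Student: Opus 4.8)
The plan is to build, for each proper $a$, a CPR-factorization via the earlier machinery and then upgrade it to a CQ-factorization by checking that every factor is primary. The one-dimensionality enters only through a structural observation about the spectrum: since $L$ is a lattice domain, $0$ is prime and is the unique minimal prime, and since $\dim L=1$ no prime can lie strictly between $0$ and a maximal element; hence every nonzero prime is maximal and $Spec(L)=\{0\}\cup Max(L)$. In particular any two incomparable primes are distinct maximal elements, which are automatically comaximal, so $L$ is treed and Theorem \ref{4} is available.

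Fix $a\in L-\{1\}$. If $a=0$, then $a=0$ is already a CQ-factorization because $0$ is prime, hence primary, so I may assume $a\neq 0$. Then $0\notin Min(a)$ (otherwise $a\leq 0$), so every element of $Min(a)$ is a nonzero prime, hence a maximal element; distinct maximal elements are pairwise comaximal, and $Min(a)$ is finite by hypothesis. By Theorem \ref{4}$(i)$, $a$ has a unique CPR-factorization $a=a_1\cdots a_n$, and from the construction behind Theorems \ref{4} and \ref{5} one has $\sqrt{a_i}=p_i$, where $Min(a)=\{p_1,\dots,p_n\}$. Thus each $\sqrt{a_i}$ is a maximal element.

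The crux is then an auxiliary fact, which is where I expect the only genuine work to lie: \emph{an element with maximal radical is primary}. Concretely, if $\sqrt{b}=m$ with $m\in Max(L)$, then $b$ is primary. To prove it, suppose $xy\leq b$ with $y\not\leq\sqrt{b}=m$. Since $m$ is maximal, $y\not\leq m$ forces $y\vee m=1$, that is $y\vee\sqrt{b}=1$; by Lemma \ref{1}$(ii)$ this gives $y\vee b=1$, and then Lemma \ref{1}$(i)$ yields $(b:y)=b$. From $xy\leq b$ I obtain $x\leq(b:y)=b$, so $b$ is primary.

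Applying this fact to each $a_i$, whose radical $p_i$ is maximal, shows that $a=a_1\cdots a_n$ is a CQ-factorization; since $a$ was an arbitrary proper element, $L$ is a CQ-lattice. The argument stays short precisely because the dimension hypothesis collapses the gap between prime radical and primary at the level of individual factors: the potentially delicate phenomenon that a prime-radical element need not be primary in general (as exhibited by the lattice $L_4$ above) is exactly what maximality of the radical rules out.
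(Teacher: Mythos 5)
Your proof is correct and follows essentially the same route as the paper: reduce to $a\neq 0$, note that one-dimensionality makes every element of $Min(a)$ maximal (hence pairwise comaximal), invoke the factorization machinery of Theorems \ref{5}/\ref{4} to get $a=a_1\cdots a_n$ with $\sqrt{a_i}$ maximal, and conclude because an element with maximal radical is primary. The only difference is cosmetic: the paper asserts that last fact without proof, while you supply the short verification via Lemma \ref{1}, which is a welcome addition rather than a deviation.
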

\begin{proof} 
Let $a\in L-\{1\}$. As $0$ is prime, we may assume that $a\neq 0$. Then $\sqrt{a}=m_1\wedge \cdots \wedge m_n$ where 
$m_1,...,m_n$ are the maximal elements greater than $a$. By Theorem \ref{5}, $a$ has a CPR-factorization $a=a_1\cdots a_n$ where $\sqrt{a_i}=m_i$, $i=1,...,n$. Since an element with maximal radical is primary, we are done.
\end{proof}

Using ideas from the proof of \cite[Theorem 8]{BH}, we can 
slightly sharpen Corollary \ref{18} as follows.

\begin{theorem}\label{15}  
Let $L\neq \{0,1\}$ be a lattice domain. Suppose that $L$ is generated by a set  $G$  such that $(ab:a)\leq \sqrt{b}$ for every $a,b\in G-\{0\}$. The following   are equivalent.

$(i)$ $ab$ has a CQ-factorization for every $a,b\in G-\{1\}$.

$(ii)$ $L$ is one-dimensional  and $Min(a)$ is finite for each $a\in L$.

$(iii)$ $L$ is a CQ-lattice.
\end{theorem}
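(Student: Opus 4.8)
The plan is to prove the cycle of implications $(iii)\Rightarrow(ii)\Rightarrow(i)$ as the easy direction and then concentrate on the crucial implication $(i)\Rightarrow(ii)$, after which Lemma \ref{24} closes the loop back to $(iii)$. The implication $(iii)\Rightarrow(ii)$ follows immediately from Corollary \ref{18}, since a CQ-lattice is in particular a CQ-lattice generated by $G$ and the hypothesis $(ab:a)\leq\sqrt{b}$ guarantees (as I verify below) that the generators behave like compact join-principal elements for the purposes of that corollary; alternatively one combines Theorem \ref{17} with the one-dimensionality criterion of \cite[Corollary 3.4]{AAJ}. The implication $(ii)\Rightarrow(iii)$ is exactly Lemma \ref{24}, and $(iii)\Rightarrow(i)$ is trivial since every element, in particular each product $ab$, then has a CQ-factorization. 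So the content is entirely in $(i)\Rightarrow(ii)$.

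For $(i)\Rightarrow(ii)$, I would first observe that since every CQ-factorization is a CPR-factorization, hypothesis $(i)$ gives that $ab$ has a CPR-factorization for all $a,b\in G-\{1\}$; by Theorem \ref{11} this already forces $L$ to be treed, and since $L$ is a lattice domain it has the single minimal prime $0$, so hypothesis $(2)$ of Theorem \ref{12} holds automatically by Remark \ref{13}. The two remaining tasks are to show $L$ is one-dimensional and that $Min(a)$ is finite for each $a$. The dimension bound is where the hypothesis $(ab:a)\leq\sqrt{b}$ does its real work: I would show it implies that every element with prime radical is primary, restricted to generators, and then propagate this to arbitrary elements via the comaximal factorization machinery. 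Concretely, if $p<q$ were a strict chain of primes with $q$ non-minimal, I would pick generators $g\leq q$ with $g\not\leq p$ and $h\leq p$, $h\not\leq 0$, and use the factorization of $gh$ together with $(gh:g)\leq\sqrt{h}\leq p$ to derive that the primary component of $gh$ whose radical sits at $q$ cannot exist, contradicting $g\leq q$. The cleanest route is probably to invoke \cite[Corollary 3.4]{AAJ}: the condition on $G$ is designed precisely so that every element with prime radical is primary, which by that corollary yields dimension one.

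The finiteness of $Min(a)$ I would obtain from Theorem \ref{12} once its three hypotheses are in place. Hypothesis $(3)$ is immediate from $(i)$ (CQ-factorizations are CPR-factorizations), hypothesis $(2)$ holds by Remark \ref{13}, and hypothesis $(1)$ is automatic here because, once one-dimensionality is established, every non-minimal prime is maximal and hence is below exactly one maximal element, namely itself. Thus Theorem \ref{12} applies and $L$ is a CPR-lattice with $Min(a)$ finite for all $a$, giving $(ii)$ in full.

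The main obstacle I anticipate is the careful deduction that the local hypothesis $(ab:a)\leq\sqrt{b}$ on generators upgrades to the global statement ``every element with prime radical is primary,'' since the quotient and radical operations interact with joins of generators in a way that requires the compactness and generation axioms and the formulas of Lemma \ref{9}. Getting dimension one cleanly — rather than merely ruling out one specific chain — is the delicate point, and leaning on \cite[Corollary 3.4]{AAJ} is the most economical way to avoid reproving that structural fact from scratch.
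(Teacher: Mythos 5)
Your overall architecture --- the cycle $(i)\Rightarrow(ii)\Rightarrow(iii)\Rightarrow(i)$, with Lemma \ref{24} for $(ii)\Rightarrow(iii)$, triviality for $(iii)\Rightarrow(i)$, and Theorems \ref{11}, \ref{12}, Remark \ref{13} and Theorem \ref{4} for the $Min$-finiteness half of $(ii)$ --- matches the paper, and that part is sound, including your observation that once dimension one is known every nonzero prime is maximal, so hypothesis $(1)$ of Theorem \ref{12} is automatic. The genuine gap is in the one step carrying all the weight: one-dimensionality in $(i)\Rightarrow(ii)$. Your preferred route, invoking \cite[Corollary 3.4]{AAJ}, is unavailable: that corollary requires $L$ to be generated by \emph{compact join-principal} elements, and Theorem \ref{15} is stated precisely as a sharpening of Corollary \ref{18} in which that hypothesis is replaced by the weaker condition $(ab:a)\leq \sqrt{b}$; the generators in $G$ are assumed neither compact nor join-principal, and your claim that they ``behave like compact join-principal elements for the purposes of that corollary'' is announced but never verified, and is not true in general. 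Worse, the input you would feed into AAJ --- ``every element with prime radical is primary'' --- is a consequence of $(iii)$ via Theorem \ref{17}, not of $(i)$, so using it to establish $(i)\Rightarrow(ii)$ begs the question. (Your separate claim that $(iii)\Rightarrow(ii)$ ``follows immediately from Corollary \ref{18}'' fails for the same reason, though that implication is redundant in your cycle anyway.)

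Your fallback direct sketch is also off in two places. First, the contradiction is not that ``the primary component of $gh$ whose radical sits at $q$ cannot exist'': nothing forces any component of $gh$ to have radical $q$, since the component radicals in a CQ-factorization are the minimal primes over $gh$, which need not include $q$. Second, you use the wrong quotient. With a chain $0<p<q$, $0\neq h\leq p$, $g\leq q$, $g\not\leq p$, and a CQ-factorization $gh=q_1\cdots q_n$, primeness of $p$ gives some $q_1\leq p$, and then primaryness of $q_1$ yields $h\leq q_1$ (because $g\not\leq \sqrt{q_1}\leq p$), \emph{not} $g\leq q_1$; hence the usable bound is $q_2\cdots q_n\leq (gh:h)\leq \sqrt{g}\leq q$, obtained from $h\,q_2\cdots q_n\leq q_1\cdots q_n=gh$, whereas your $(gh:g)\leq \sqrt{h}\leq p$ cannot be brought to bear, since you have no way to show $g\,q_2\cdots q_n\leq gh$. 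The paper's proof then closes by comaximality: $q_1\leq p\leq q$ forces $q_j\not\leq q$ for $j\geq 2$, so $q_2\cdots q_n\not\leq q$ as $q$ is prime, contradicting the displayed bound. Since the hypothesis $(ab:a)\leq\sqrt{b}$ is symmetric in $a,b$, your sketch is repairable by swapping the roles of the two quotients, but as written it does not close --- and the route you flag as ``cleanest'' is the one that fails outright.
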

\begin{proof}
$(iii)\Rightarrow (ii)$ 
We show that $L$ is one-dimensional. Suppose on the  contrary that there exist prime elements $0<p<m$. We can find $a,b\in G$ such that 
$$0<a\leq p,\ \  b\leq m \ \mbox{ and }\   b\not\leq p.$$ 
Let $ab=q_1\cdots q_n$ be a CQ-factorization of $ab$. 
As $ab\leq a \leq p$, we get  that  $p\geq q_i$ for some $i$,  say $p\geq q_1$. Since $b\not\leq p$ and $\sqrt{q_1}\leq p$, we get that $b \not\leq \sqrt{q_1}$. 
As $ab\leq q_1$ and $q_1$ is primary, it follows that $a\leq q_1$. 
Since $ab=q_1\cdots q_n\leq b\leq m$ and $q_1,...,q_n$ are pairwise comaximal, we get that $q_2\cdots q_n\not\leq  m$. From  
$$aq_2\cdots q_n \leq q_1\cdots q_n =ab,  
$$ and by our hypothesis about $G$, we get $$q_2\cdots q_n \leq 
(ab:a)\leq \sqrt{b}\leq m,  
$$
 which is a contradiction. So $L$ is one-dimensional. To complete,  apply Theorem \ref{12}, Remark \ref{13} and Theorem \ref{4}.
 
 $(ii) \Rightarrow (iii)$ is Lemma \ref{24} and 
$(iii) \Rightarrow (i)$ is obvious. 
\end{proof}

The next example is a finite CQ-lattice domain of dimension $2$.

\begin{example}\label{16}
Let $L$ be the lattice with underlying set $\{0,1,a,b,c,d \}$ ordered by 
$a\leq b\leq c$,  $b\leq d$ whose multiplication is the meet.
Its primes are $0,a,c,d$ so $L$ is a  CQ-lattice, because $b=cd$ is a CQ-factorization of $b$.
\end{example}

We switch to CPP-lattices (see Definition \ref{14}).
Let $L$ be a lattice domain generated by principal elements. 
Recall 
that  $L$ is a {\em Dedekind lattice}  if every element of $L$ is a finite product of prime elements (see for instance \cite[Definition 2.1]{AJ}). A Dedekind lattice is one-dimensional \cite[Theorem 2.6]{AJ}. Moreover, $L$ is a Dedekind lattice iff every element is principal (see for instance the remark before Theorem 3.1 in \cite{AJ}).
 Though the following result is well-known, we insert a proof for reader's convenience.

\begin{lemma}\label{8}  
Let $L$ be  lattice domain generated by principal elements. If every prime element of $L$ is principal, then $L$ is a Dedekind lattice. 
\end{lemma}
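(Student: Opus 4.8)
The plan is to reduce the statement to showing that \emph{every} element of $L$ is principal; once this is done, $L$ is a Dedekind lattice by the characterization recalled just before the lemma (an element being principal and every element being a finite product of primes). So let $\mathcal{N}$ denote the set of non-principal elements of $L$, and suppose for contradiction that $\mathcal{N}\neq\emptyset$. I would produce a \emph{maximal} element of $\mathcal{N}$ (ordered by $\leq$) and show it is prime; since every prime element is principal by hypothesis, this yields the desired contradiction and forces $\mathcal{N}=\emptyset$.

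To apply Zorn's lemma I must check that the join $s=\bigvee C$ of a nonempty chain $C\subseteq\mathcal{N}$ again lies in $\mathcal{N}$. Here I use that principal elements are compact: if $s$ were principal, then compactness applied to $s=\bigvee_{x\in C}x$ gives $s\leq\bigvee F$ for a finite $F\subseteq C$, and since $C$ is a chain this collapses to $s\leq x_0$ for a single $x_0\in C$; then $s=x_0\in\mathcal{N}$ is non-principal, a contradiction. Hence $s\in\mathcal{N}$, Zorn applies, and I obtain a maximal non-principal element $p$. A one-line check from the definitions shows $1$ is principal (from $(a:1)=a$ one reads off that $1$ is both meet- and join-principal), so $p$ is proper and primality is meaningful for $p$.

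It remains to prove that $p$ is prime, for which I would transcribe Kaplansky's classical argument into the lattice language. Suppose $xy\leq p$ with $x\not\leq p$ and $y\not\leq p$. Because $L$ is generated by principal elements, I may replace $x,y$ by principal $g\leq x$ and $h\leq y$ with $g\not\leq p$, $h\not\leq p$, so that $gh\leq p$. Set $c=p\vee g$; since $g\not\leq p$ we have $c>p$, so $c$ is principal by maximality of $p$. Direct computations give $pc=p^2\vee pg\leq p$ and $hc=hp\vee gh\leq p$, hence both $p\leq(p:c)$ and $h\leq(p:c)$; as $h\not\leq p$, the quotient $(p:c)$ strictly exceeds $p$ and is therefore principal, say $d=(p:c)$. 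Finally, $c$ principal gives (weak) meet-principality $p\wedge c=(p:c)\,c$, and since $p\leq c$ this reads $p=c\,d$. But a product of two principal elements is principal, contradicting $p\in\mathcal{N}$. Thus $p$ is prime, hence principal by hypothesis — the final contradiction.

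I expect the main obstacle to be the primality step: arranging the quotient manipulations so that $(p:c)$ is forced strictly above $p$ and, crucially, invoking the weak meet-principal identity for the principal element $c$ together with $p\leq c$ to recognize $p=cd$ as a product of principals. The Zorn reduction is routine once compactness of principal elements is used to keep joins of chains inside $\mathcal{N}$, and the reduction of $x,y$ to principal generators is immediate from the generating hypothesis.
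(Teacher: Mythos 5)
Your proof is correct and is essentially the paper's own argument: both use Zorn's lemma (justified via compactness of principal elements, a point the paper leaves implicit) to get a maximal non-principal element $p$, then exhibit $p$ as a product of two elements strictly above $p$ --- in the paper $x=a(x:a)$, in your version $p=c\,(p:c)$ with $c=p\vee g$ --- which are principal by maximality, contradicting non-principality via the weak meet-principal identity. The only cosmetic difference is that you run the contradiction in contrapositive form (proving $p$ is prime and invoking the hypothesis) and make the superfluous reduction of $x,y$ to principal generators $g,h$, whereas the paper directly uses non-primality of $p$; the computations are the same.
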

\begin{proof} 
Suppose by contrary that there exists a non-principal element $x\in L$. Using a Zorn's Lemma argument, we may assume that $x$ is maximal with this property. Then $x$ is not prime, so there exist $a,b\in L$ such that $x<a$, $x<b$ and $ab\leq x$. 
By the maximal choice of $x$, we get that $a$ and $(a:x)$ are principal elements because
$x< b \leq (x:a)$. Since $a$ is principal and $x < a$, we get that $x=a(x:a)$ is principal, a contradiction.
\end{proof}

The next result extends \cite[Theorem 9]{BH}.

\begin{theorem}\label{22} 
Let $L$ be  lattice domain generated by principal elements. Then $L$ is a Dedekind lattice iff every nonzero principal element $x$ has a CPP-factorization.
\end{theorem}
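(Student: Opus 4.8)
The plan is to prove both implications of the biconditional, using Lemma \ref{8} as the main engine for the harder direction.

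For the forward direction, suppose $L$ is a Dedekind lattice. Then by the remark before Theorem 3.1 in \cite{AJ}, every element of $L$ is principal, so in particular a Dedekind lattice is one-dimensional with $Min(a)$ finite for each $a$ (being a finite meet of the maximal elements above $a$). Given a nonzero principal element $x\neq 1$, I would write $\sqrt{x}=m_1\wedge\cdots\wedge m_n$ where $m_1,\dots,m_n$ are the (finitely many, pairwise comaximal) maximal elements above $x$. By Theorem \ref{5}, $x$ factors as $x=x_1\cdots x_n$ with the $x_i$ pairwise comaximal and $\sqrt{x_i}=m_i$. It then remains to check that each $x_i$ is a prime power. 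Since $L$ is one-dimensional and each $m_i$ is maximal, the localization-type structure at $m_i$ forces $x_i$ to be a power of the principal prime $m_i$; concretely, every element with maximal radical in a Dedekind lattice is a prime power because the set of elements $\leq$ some fixed power of $m_i$ forms a discrete valuation structure. This yields the CPP-factorization of $x$.

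For the reverse direction, the goal is to show that every prime element of $L$ is principal and then invoke Lemma \ref{8}. Suppose every nonzero principal element has a CPP-factorization. First I would show $L$ is one-dimensional: if $0<p<m$ were a chain of primes, I would choose a principal element $0<x\leq p$ and examine a CPP-factorization $x=x_1\cdots x_n$; since $x\leq p$ and the $x_i$ are pairwise comaximal with prime-power form, exactly one factor lies below $p$, and the prime $\sqrt{x_i}$ it determines cannot simultaneously satisfy $\sqrt{x_i}\leq p<m$ while the factor is a power of a prime $\leq m$, producing a contradiction with comaximality. Having dimension one, every nonzero prime is maximal. To see a given nonzero prime $p$ is principal, pick a nonzero principal $x\leq p$; its CPP-factorization has a unique factor $x_i$ with $\sqrt{x_i}=p$ (the others being comaximal to $p$), so $x_i=p^k$ for some $k\geq 1$ is principal, whence $p^k$ is principal. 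In a lattice domain, $p^k$ principal and nonzero forces $p$ itself to be principal (a power being principal descends to the base in a domain, using cancellativity of nonzero principal elements and the fact stated in the preliminaries that in a lattice domain the converse of "product of principals is principal" holds).

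The main obstacle I expect is the descent step: showing that $p^k$ principal implies $p$ principal. The preliminaries state that if $xy$ is principal and nonzero in a lattice domain then both $x,y$ are principal, so writing $p^k=p\cdot p^{k-1}$ immediately gives that $p$ is principal, provided $p\neq 0$ — which holds since $p$ is a nonzero prime. Thus the descent is actually a direct application of the quoted preliminary fact, and the genuine care lies in the one-dimensionality argument and in verifying that the unique factor with radical $p$ is indeed a power of $p$ rather than merely having radical $p$; here the CPP hypothesis supplies exactly that the factor is a prime power, closing the gap. Once every prime is principal, Lemma \ref{8} delivers that $L$ is Dedekind, completing the proof.
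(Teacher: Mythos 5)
Your reverse direction has a genuine gap at the one-dimensionality step, and the contradiction you claim there does not exist. Starting from primes $0<p<m$ and a principal element $0<x\leq p$ with CPP-factorization $x=q_1^{e_1}\cdots q_k^{e_k}$, all you can extract is one factor $q_i^{e_i}\leq p$, hence a nonzero principal prime $q_i\leq p<m$ (each $q_i$ is principal by the descent fact you quote). That situation is internally consistent: a prime power below $p$ with $q_i\leq m$ violates no comaximality among the factors, so the sentence ``the prime $\sqrt{x_i}$ it determines cannot simultaneously satisfy $\sqrt{x_i}\leq p<m$ while the factor is a power of a prime $\leq m$'' asserts a contradiction that is not there. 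Indeed, merely producing a nonzero principal prime below a non-maximal prime is no absurdity (compare rings: $(X)\subset (X,Y)$ in $k[X,Y]$, a domain with a nonzero principal non-maximal prime). What the CPP hypothesis actually buys, and what the paper's proof uses, is a two-prime interplay: since every nonzero principal element has a CPR-factorization and $L$ is generated by compact principal elements, Theorem \ref{11} and Corollary \ref{6} make $L$ \emph{treed}. The paper then takes a principal $x\leq m$ with $x\not\leq p$, refines it to a nonzero principal \emph{prime} $x\leq m$, separately produces a nonzero principal prime $y\leq p$, uses treedness to force comparability $y<x$, and closes with principal-element arithmetic: $y=x(y:x)=xy$ because $y$ is prime and $x\not\leq y$, whence $1=(xy:y)=x\vee(0:y)=x$ by join-principality of $y$ and $(0:y)=0$ in a domain --- contradiction. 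None of this (treedness, the second prime $x\not\leq p$, the cancellation computation) appears in your sketch, and without it your argument stalls after producing $q_i\leq p$.

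Your forward direction is also shakier than it needs to be: the claim that ``every element with maximal radical in a Dedekind lattice is a prime power'' is asserted via an unproved ``discrete valuation structure,'' and the detour through Theorem \ref{5} is unnecessary. By the very definition used in the paper, a Dedekind lattice has every element equal to a finite product of primes; since such a lattice is one-dimensional and $L$ is a domain, every prime occurring in a factorization of a nonzero $x$ is maximal, so grouping equal primes in $x=p_1\cdots p_n$ immediately yields pairwise comaximal prime powers --- this is all the paper means by ``follows from the paragraph before Lemma \ref{8}.'' The parts of your reverse direction after one-dimensionality are fine and agree with the paper in spirit: the unique factor of the CPP-factorization below a nonzero prime $p$ has radical exactly $p$ by maximality, the descent from $p^k$ principal to $p$ principal is a correct application of the stated converse (a nonzero principal product in a lattice domain has principal factors), and Lemma \ref{8} then finishes. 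So the proposal is salvageable, but the dimension-one step must be replaced by the treedness-plus-cancellation argument.
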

\begin{proof} 
 $(\Rightarrow)$ follows from the paragraph before Lemma \ref{8}.

$(\Leftarrow)$ We claim that $L$ is one-dimensional. Suppose by contrary that
there exist two nonzero prime elements $p<m$. Let $x\leq m$ be a principal element such that $x\not\leq p$. Since  $x$ has a   CPP-factorization, $x$ is a product of primes $x=p_1\cdots p_n$. Note that each factor $p_i$ is principal.  Since $p_1\cdots p_n \leq m$, it follows that $m\geq p_i$ for some $i$. Replacing $x$ by that $p_i$, we may assume that $x$ is prime. 
Repeating this argument, we can produce a nonzero principal prime $y\leq p$.
By Corollary \ref{6}, it follows that $y$ and $x$ are comparable, so $y<x$. Then $y=x(y:x)=xy$,  because $y$ is prime. We get $1=(xy:y)=x$, a contradiction. Thus $L$ is one-dimensional. 
Let $r$ be a nonzero prime element of $L$. As argued in the beginning of our proof, there exists a  nonzero principal prime $z\leq r$. By one-dimensionality, we get that $r=z$ is principal, so Lemma \ref{8} applies.
\end{proof}

Note that the lattice in Example \ref{16} is a non-Dedekind CPP-lattice.


\end{document}